\newcommand{\pn}{\par\noindent}
\newcommand{\pmn}{\par\medskip\noindent}
\newtheorem{theor}{Theorem}[section]
\theoremstyle{definition} 
\newtheorem{ex}{Example}[section] \theoremstyle{remark}
\newtheorem{rem}{Remark}[section]
\begin{document}
\title{On the construction of frieze patterns from partitions
of convex polygons by nonintersecting diagonals}
\author{Yury Kochetkov}
\date{}
\email{yukochetkov@hse.ru,yuyukochetkov@gmail.com}

\begin{abstract} We demonstrate in an elementary way how to construct a
frieze pattern of width $m-3$ from a partition of a convex $m$-gon
by not intersecting diagonals.
\end{abstract}

\maketitle

\section{Introduction}
\pn Frieze patterns were introduced by Coxeter in the work
\cite{Cox}: a frieze pattern is an array of finite many infinite
rows of numbers (infinite both to the right and to the left). Two
upper rows are row of zeroes and row of units and two bottom rows
are row of units and row of zeroes. Each next row is shifted with
respect to the previous one in the following way:
\[{\small\begin{tabular}{cccccccccccc} $\cdots$&0&&0&&0&&0&&0&&$\cdots$\\
$\cdots$&&1&&1&&1&&1&&1&$\cdots$\\
$\cdots$&a&&b&&c&&d&&e&&$\cdots$ \\ \end{tabular}}\] We demand
that for each four adjacent elements
$${\small\begin{array}{ccc}&a&\\b&&c\\&d&\end{array}}\text{ the relation }
\,\,bc-ad=1 \eqno(1)$$ is satisfied. The number of rows between
rows of units is called the \emph{width} of a frieze pattern. The
upper row of units has number $0$, and the next row has number
one. A sequence of $m+2$ elements, one in each row, beginning from
the upper row of units, where next element is shifted one step to
the right, is called \emph{diagonal}.

\begin{ex} In the table below a fragment of a frieze pattern of width 3 is
presented with a diagonal (underlined numbers):
\[{\scriptsize \begin{tabular}{ccccccccccccccc} 0&&0&&0&&0&&0&&0&&0&&0\\
&\underline{1}&&1&&1&&1&&1&&1&&1&\\ 1&&\underline{3}&&2&&1&&3&&2&&1&&3\\
&2&&\underline{5}&&1&&2&&5&&1&&2&\\ 1&&3&&\underline{2}&&1&&3&&2&&1&&3\\
 &1&&1&&\underline{1}&&1&&1&&1&&1&\\ 0&&0&&0&&0&&0&&0&&0&&0\\
\end{tabular}}\]  \end{ex}
\pn We will use the following properties of friezes \cite{Cox},
\cite{MG}.
\begin{itemize}
\item Rows of a frieze pattern of width $m$ are periodic and the
period divides $m+3$. \item Let $\ldots,a_1,a_2,\ldots$ be
elements of the first row, and let
$d=\{v_0=1,v_1=a_1,v_2,v_3,\ldots\}$ be the diagonal with the
first element $a_1$. Then $v_2=a_2\cdot v_1-1, v_3=a_3\cdot
v_2-v_1, \ldots,v_{i+1}=a_{i+1}\cdot
v_i-v_{i-1},\ldots,1=a_{m+1}\cdot v_m-v_{m-1},0=a_{m+2}\cdot
1-v_m$. Opposite is also true: a pattern of width $m$ with the
above property for all diagonals is a frieze pattern.
\end{itemize}

\pn Next statement is quite important.

\begin{theor} Numbers in the first row of a frieze pattern cannot
be all $\geqslant 2$. \end{theor}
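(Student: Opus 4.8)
The plan is to argue by contradiction, exploiting the diagonal recurrence recorded in the second bullet point. Assume the width $m$ is at least $1$ (otherwise the first row coincides with a row of units and there is nothing to prove), and suppose, for contradiction, that every entry $a_i$ of the first row satisfies $a_i\geqslant 2$. Fix the column carrying $a_1$ and pass to the associated diagonal $d=\{v_0=1,\,v_1=a_1,\,v_2,\ldots,v_m,\,v_{m+1},\,v_{m+2}\}$, where $v_{i+1}=a_{i+1}v_i-v_{i-1}$ for $1\leqslant i\leqslant m$, and where the frieze relations additionally give $v_{m+1}=a_{m+1}v_m-v_{m-1}=1$ and $v_{m+2}=a_{m+2}-v_m=0$.

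The key step is to show that the sequence $v_0,v_1,\ldots,v_{m+1}$ is strictly increasing. For this I would prove by induction on $i$ the slightly stronger statement that $v_i>0$ and $v_i-v_{i-1}\geqslant 1$ for $1\leqslant i\leqslant m+1$. In the base case, $v_1-v_0=a_1-1\geqslant 1$ and $v_1=a_1\geqslant 2>0$. For the inductive step, rewrite the recurrence as $v_{i+1}-v_i=(a_{i+1}-1)v_i-v_{i-1}$; since $a_{i+1}-1\geqslant 1$ and $v_i>0$, this is at least $v_i-v_{i-1}$, which by the inductive hypothesis is at least $1$, so $v_{i+1}-v_i\geqslant 1$ and hence $v_{i+1}>v_i>0$. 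Thus the consecutive differences are nondecreasing and bounded below by $1$, and strict monotonicity of $v_0,v_1,\ldots,v_{m+1}$ follows.

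Finally, strict monotonicity together with $v_0=1$ forces $v_{m+1}>v_0=1$, contradicting $v_{m+1}=1$; equivalently, $v_{m+1}\geqslant v_1=a_1\geqslant 2$, which is incompatible with $v_{m+1}=1$. Hence the first row cannot consist entirely of numbers $\geqslant 2$. I do not expect any genuine obstacle here: the only point requiring care is to carry positivity of $v_i$ through the induction alongside the lower bound on the differences, since positivity is exactly what lets one pass from $(a_{i+1}-1)v_i$ to a lower bound of $v_i$. Note that the argument uses neither integrality nor positivity of the frieze entries themselves, only the normalization $v_0=1$, $v_1=a_1$ built into the diagonal recurrence.
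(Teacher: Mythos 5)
Your proof is correct and follows essentially the same route as the paper: take the diagonal starting at $v_0=1$, $v_1=a_1$, use the recurrence $v_{i+1}=a_{i+1}v_i-v_{i-1}$ with $a_{i+1}\geqslant 2$ to show the diagonal is strictly increasing, and contradict $v_{m+1}=1$. The only difference is that you spell out the induction (carrying positivity and the lower bound $v_i-v_{i-1}\geqslant 1$) that the paper leaves implicit, which is a reasonable tightening but not a different argument.
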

\begin{proof} Indeed, let $F$ be a frieze pattern of width $m$ and let
$\ldots,a_1,a_2,\ldots$ be the first row. If $D$ is the diagonal
with the first element $d_1=a_1$, then $d_2=a_2d_1-1>d_1$,
$d_3=a_3d_2-d_1>d_2$, and so on. Thus, $d_{m+1}>1$, contradiction.
\end{proof}

\pmn Conway and Coxeter \cite{CC} demonstrated how to construct a
frieze pattern $F$ of width $m-3$ with positive integer elements
from a partition of a convex $m$-gon into triangles by not
intersecting diagonals. Given a partition we define the weight
$w(v)$ of a vertex $v$: $w(v)$ is the number of triangles
adjoining to $v$. Let $\{v_1,v_2,\ldots,v_m\}$ be the numeration
of vertices in the counterclockwise order. Then the periodic
sequence $\ldots,w(v_1),w(v_2),\ldots,w(v_m),\ldots$ is the first
row of $F$. The opposite statement is also true: each integer
frieze of width $m-3$ is generated from some $m$-gon in the above
way.

\begin{ex}\quad
\parbox{2cm}{\begin{picture}(50,40) \multiput(10,0)(0,40){2}{\line(1,0){30}}
\multiput(0,20)(40,20){2}{\line(1,-2){10}}
\multiput(0,20)(10,-20){2}{\line(2,1){40}}
\multiput(0,20)(40,-20){2}{\line(1,2){10}}
\put(0,20){\line(1,0){50}}
\end{picture}} \quad $\Rightarrow$\quad\quad
\parbox{7cm}{\scriptsize\begin{tabular}{ccccccccccccccccc}
$\ldots$&0&&0&&0&&0&&0&&0&&0&&0&$\ldots$\\
$\ldots$&&1&&1&&1&&1&&1&&1&&1&&$\ldots$\\
$\ldots$&2&&1&&3&&2&&1&&3&&2&&1&$\ldots$\\
$\ldots$&&1&&2&&5&&1&&2&&5&&1&&$\ldots$\\
$\ldots$&2&&1&&3&&2&&1&&3&&2&&1&$\ldots$\\
$\ldots$&&1&&1&&1&&1&&1&&1&&1&&$\ldots$\\
$\ldots$&0&&0&&0&&0&&0&&0&&0&&0&$\ldots$\\ \end{tabular}}
\end{ex}

\pn In work \cite{HJ} the Conway and Coxeter's construction was
generalized on an arbitrary partition of a convex polygon into
polygonal parts by non intersecting diagonals. It was done in
following way: for each $n$ we define the weight $w_n$ (positive
number) of $n$-gon (the weight of triangle is $1$). The weight of
a vertex is the sum of weights of parts, adjoining to it. Authors
of the work \cite{HJ} set $w_n=2\cos\left(\frac \pi n\right)$ and
proved that the Conway-Coxeter construction with these weights
indeed generates a frieze pattern of width $m-3$ from a general
partition of a convex $m$-gon.

\begin{ex} Let us consider a partition of pentagon
\parbox{1cm}{\begin{picture}(24,19)
\multiput(2,2)(0,10){2}{\line(1,0){20}}
\multiput(2,2)(20,0){2}{\line(0,1){10}} \put(2,12){\line(2,1){10}}
\put(12,17){\line(2,-1){10}} \end{picture}}  into triangle and
quadrangle. Here $w_3=1$ and $w_4=\sqrt 2$ and the generated
frieze pattern is presented below.
\[{\scriptsize \begin{tabular}{ccccccccccccccc}
$\cdots$&0&&0&&0&&0&&0&&0&&0&$\cdots$\\
$\cdots$&&1&&1&&1&&1&&1&&1&&$\cdots$\\ $\cdots$&$\sqrt 2$&&$\sqrt
2$&&$1+\sqrt 2$&&$1$&&$1+\sqrt 2$&&$\sqrt 2$&&$\sqrt 2$&$\cdots$\\
$\cdots$&&$1$&&$1+\sqrt 2$&&$\sqrt 2$&&$\sqrt 2$&&$1+\sqrt
2$&&$1$&&$\cdots$\\ $\cdots$&1&&1&&1&&1&&1&&1&&1&$\cdots$\\
$\cdots$&&0&&0&&0&&0&&0&&0&&$\cdots$\\ \end{tabular}}\]
\end{ex}

\pn Our aim is to recover weights $w_n=2\cos\left(\frac \pi
n\right)$ in an elementary way (the work \cite{HJ} is quite
difficult).

\section{General partitions. Weights}
\pn In this section we will prove that the number
$2\cos\left(\frac \pi n\right)$ indeed is the best possible choice
for the weight $w_n$. \pmn Let $t$ be the weight of $m$-gon $P$
and let $\sigma$ be the trivial partition of $P$ into one part.
Then all elements of the first row of the corresponding frieze
pattern will be $t$, all elements of the second row will be
$t^2-1$, all elements of the third row will be $t^3-2t$, and so
on. Let us define the sequence of polynomials $\{Q_1=x,
Q_2=x^2-1,\ldots,Q_n=x\cdot Q_{n-1}-Q_{n-2},\ldots\}$.
\begin{theor} $$Q_n=\sum_{0\leqslant k\leqslant n/2} (-1)^k
\binom{n-k}{k}\cdot x^{n-2k}.$$ \end{theor}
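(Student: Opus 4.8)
The plan is to prove the identity by induction on $n$, exploiting the defining three-term recurrence $Q_n = x\,Q_{n-1} - Q_{n-2}$. Write $R_n(x) = \sum_{0\leqslant k\leqslant n/2}(-1)^k\binom{n-k}{k}x^{n-2k}$ for the claimed right-hand side, with the usual convention that $\binom{a}{b}=0$ whenever $b<0$ or $b>a$. Since the sequence $Q_n$ is uniquely determined by the two initial values together with the recurrence, it suffices to check (i) $R_1 = x$ and $R_2 = x^2-1$, and (ii) $R_n = x\,R_{n-1} - R_{n-2}$ for all $n\geqslant 3$. Part (i) is an immediate evaluation: for $n=1$ only $k=0$ survives and gives $x$, and for $n=2$ the terms $k=0$ and $k=1$ give $x^2$ and $-1$.

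For part (ii) I would simply expand and match coefficients of each monomial $x^{n-2k}$. Multiplying the formula for $R_{n-1}$ by $x$ gives $x\,R_{n-1} = \sum_{k}(-1)^k\binom{n-1-k}{k}x^{n-2k}$. In $R_{n-2} = \sum_{j}(-1)^j\binom{n-2-j}{j}x^{n-2-2j}$ I substitute $j = k-1$, obtaining $R_{n-2} = \sum_{k\geqslant 1}(-1)^{k-1}\binom{n-1-k}{k-1}x^{n-2k}$. Subtracting, the coefficient of $x^{n-2k}$ in $x\,R_{n-1}-R_{n-2}$ becomes $(-1)^k\left[\binom{n-1-k}{k}+\binom{n-1-k}{k-1}\right]$, which by Pascal's rule equals $(-1)^k\binom{n-k}{k}$, i.e.\ exactly the coefficient of $x^{n-2k}$ in $R_n$. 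This proves $R_n = x\,R_{n-1}-R_{n-2}$, completing the induction.

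The only delicate point — and the step I expect to need the most care — is the bookkeeping of the summation ranges, since $x\,R_{n-1}$, $R_{n-2}$ and $R_n$ run over slightly different sets of indices $k$ depending on the parity of $n$. One has to confirm that the Pascal combination remains valid at the extreme indices: at $k=0$ only the term from $x\,R_{n-1}$ contributes, and $\binom{n-1}{0}=\binom{n}{0}=1$ agrees with $\binom{n-1}{-1}=0$; when $n$ is even and $k=n/2$ only the (reindexed) term from $R_{n-2}$ contributes, and $\binom{n/2-1}{n/2-1}=\binom{n/2}{n/2}=1$ agrees with $\binom{n/2-1}{n/2}=0$. Adopting the vanishing convention for out-of-range binomial coefficients throughout makes all of these cases uniform, so no separate argument is really needed.

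As an alternative (and as a sanity check) one could instead note that, extending the recurrence backwards forces $Q_0 = 1$, so the generating function $G(x,t) = \sum_{n\geqslant 0}Q_n(x)t^n$ satisfies $(1 - xt + t^2)\,G = 1$; expanding $G = \sum_{m\geqslant 0}(xt - t^2)^m$ by the binomial theorem and collecting the coefficient of $t^n$ reproduces $\sum_{k}(-1)^k\binom{n-k}{k}x^{n-2k}$ directly. I would, however, present the inductive argument as the main proof, since it stays closer to the elementary spirit of the paper and uses only the recurrence already in hand.
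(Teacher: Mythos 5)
Your proof is correct and follows the same route as the paper, which simply says ``Induction'': you carry out that induction explicitly via the recurrence $Q_n = xQ_{n-1}-Q_{n-2}$ and Pascal's rule, with careful handling of the boundary indices. Nothing further is needed.
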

\begin{proof} Induction. \end{proof}
\pn If the trivial partition of $m$-gon and the weight $t$ indeed
generate a frieze, then $Q_{m-2}(t)=1$ and $Q_{m-1}(t)=0$. As the
weight is $<2$, then we can make the change of variable:
$x:=2\cos(\alpha)$. We have
$$\begin{array}{l} Q_1=2\cos(\alpha)\\ Q_2=4\cos^2(\alpha)-1=
2\cos(2\alpha)+1\\ Q_3=4\cos(\alpha)\cos(2\alpha)=2\cos(3\alpha)+
2\cos(\alpha)\\ Q_4=4\cos(\alpha)\cos(3\alpha)+4\cos^2(\alpha)-
2\cos(2\alpha)-1=2\cos(4\alpha)+2\cos(2\alpha)+1\\
\hspace{3cm} \cdots\quad\cdots\quad\cdots\\
Q_{2k}=2\cos(2k\alpha)+2\cos((2k-2)\,\alpha)+\ldots+2\cos(2\alpha)+1\\
Q_{2k+1}=2\cos((2k+1)\,\alpha)+2\cos((2k-1)\,\alpha)+\ldots+
2\cos(3\alpha)+2\cos(\alpha)\\ \hspace{3cm}\ldots\quad\ldots\quad
\ldots \end{array}$$

\begin{theor} If $\alpha=\frac \pi n$, then $Q_{n-2}(\alpha)=1$
and $Q_{n-1}(\alpha)=0$.\end{theor}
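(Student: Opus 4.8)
The plan is to obtain a closed trigonometric formula for $Q_n$ under the substitution $x=2\cos\alpha$ and then simply evaluate it at $\alpha=\pi/n$ (here, as in the statement, $Q_k(\alpha)$ abbreviates the value of $Q_k$ at $x=2\cos\alpha$). Concretely, I would first prove the identity
$$Q_n=\frac{\sin\bigl((n+1)\alpha\bigr)}{\sin\alpha}\qquad(n\geqslant 1),$$
which also holds for $n=0$ if one reads $Q_0=1$ as forced by the recurrence. Once this is available the theorem is a one-line substitution: taking $\alpha=\pi/n$ gives $Q_{n-1}=\sin(\pi)/\sin(\pi/n)=0$, and $Q_{n-2}=\sin\bigl((n-1)\pi/n\bigr)/\sin(\pi/n)=\sin(\pi-\pi/n)/\sin(\pi/n)=1$, using $\sin\pi=0$ and $\sin(\pi-t)=\sin t$.

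To establish the identity I would induct on $n$ using precisely the defining recurrence $Q_n=xQ_{n-1}-Q_{n-2}$. The base cases $Q_1=2\cos\alpha=\sin(2\alpha)/\sin\alpha$ and $Q_2=4\cos^2\alpha-1=\sin(3\alpha)/\sin\alpha$ are direct (the latter from $\sin(3\alpha)=\sin(2\alpha)\cos\alpha+\cos(2\alpha)\sin\alpha$). For the step, assuming the formula for $Q_{n-1}$ and $Q_{n-2}$,
$$Q_n=2\cos\alpha\cdot\frac{\sin(n\alpha)}{\sin\alpha}-\frac{\sin\bigl((n-1)\alpha\bigr)}{\sin\alpha}=\frac{2\cos\alpha\,\sin(n\alpha)-\sin\bigl((n-1)\alpha\bigr)}{\sin\alpha},$$
and the product-to-sum relation $2\cos\alpha\,\sin(n\alpha)=\sin\bigl((n+1)\alpha\bigr)+\sin\bigl((n-1)\alpha\bigr)$ finishes the induction. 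Alternatively — and closer to the finite cosine expansions already displayed in the text — one can sum those series directly: multiplying $1+2\cos(2\alpha)+\dots+2\cos(2k\alpha)$ by $2\sin\alpha$ telescopes (via $2\sin\alpha\cos(2j\alpha)=\sin((2j+1)\alpha)-\sin((2j-1)\alpha)$) to $2\sin\bigl((2k+1)\alpha\bigr)$, and likewise $2\cos\alpha+2\cos(3\alpha)+\dots+2\cos((2k+1)\alpha)$ times $2\sin\alpha$ telescopes to $2\sin\bigl((2k+2)\alpha\bigr)$; in both parities this recovers $Q_n=\sin\bigl((n+1)\alpha\bigr)/\sin\alpha$.

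I do not expect a real obstacle here: everything reduces to elementary trigonometric identities. The only points needing care are bookkeeping — checking that both parity cases of the listed expansion for $Q_n$ are subsumed by the single closed form — and noting that $\sin\alpha=\sin(\pi/n)\neq 0$ for the relevant range $n\geqslant 3$, so the division is legitimate. With the closed form in hand, the equalities $Q_{n-1}=0$ and $Q_{n-2}=1$ follow instantly.
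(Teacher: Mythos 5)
Your proof is correct, and it takes a genuinely different route from the paper. The paper stays with the parity-split expansions $Q_{2k}=1+2\cos(2\alpha)+\dots+2\cos(2k\alpha)$ and $Q_{2k+1}=2\cos(\alpha)+\dots+2\cos((2k+1)\alpha)$ and, after setting $\alpha=\pi/n$, evaluates these sums by symmetry of the angles $\frac{j\pi}{n}$ (the full cosine sum over a period vanishing, and terms cancelling in pairs via $\cos\left(\frac{2i\pi}{n}\right)=-\cos\left(\frac{(n-2i)\pi}{n}\right)$), which forces four separate cases according to the parities of $n$ and of the index. You instead prove the single closed form $Q_m=\sin\bigl((m+1)\alpha\bigr)/\sin\alpha$ by induction on the defining recurrence (i.e.\ you identify $Q_m$ with the Chebyshev polynomial of the second kind evaluated at $\cos\alpha$), after which both equalities $Q_{n-1}(\pi/n)=0$ and $Q_{n-2}(\pi/n)=1$ drop out in one line, with no case distinction; your telescoping remark even shows how the paper's cosine expansions are subsumed by your closed form. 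What your route buys is uniformity and brevity, plus a bonus: the closed form makes the paper's subsequent Remark transparent, since the common roots of $Q_{n-1}$ and $Q_{n-2}-1$ are visibly the values $2\cos\left(\frac{k\pi}{n}\right)$ with the stated conditions on $k$. What the paper's route buys is that it reuses the explicit cosine expansions it has already displayed (and will not need the sine formula elsewhere), at the cost of the parity bookkeeping. Your caveat that $\sin(\pi/n)\neq 0$ for $n\geqslant 3$ is the right one to record, and your base cases and the product-to-sum step are all in order.
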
 \begin{proof} Let $n$ be odd,
$n=2k+1$, then
$$Q_{2k-1}(\alpha)=\sum_{i=1}^k 2\cos\left(\frac{(2i-1)\pi}{n}\right)=
\sum_{i=1}^{2k+1} \cos\left(\frac{(2i-1)\pi}{n}\right)-
\cos\left(\frac{(2k+1)\pi}{n}\right)=1.$$ Also,
$$Q_{2k}(\alpha)=1+\sum_{i=1}^k 2\cos\left(\frac{2i\pi}{n}\right)=
\sum_{i=1}^n\cos\left(\frac{2i\pi}{n}\right)=0.$$ Let $n$ be even,
$n=2k$, then
$$Q_{2k-2}(\alpha)=1+\sum_{i=1}^{k-1} 2\cos\left(\frac{2i\pi}{n}\right)=1,$$ because
$\cos\left(\frac{2i\pi}{n}\right)=-\cos\left(\frac{(n-2i)\pi}{n}\right)$
for $i\leqslant \frac k2$. And
$$Q_{2k-1}(\alpha)=\sum_{i=1}^k 2\cos\left(\frac{(2i-1)\pi}{n}\right)=0$$ for the
same reasons. \end{proof}

\begin{rem} Thus, polynomials $Q_{n-2}-1$ and $Q_{n-1}$ have a common factor.
Actually, common roots of these polynomials are numbers
$2\cos\left(\frac{i\pi}{n}\right)$ for $i$ odd and coprime with
$n$. However, weights $2\cos\left(\frac{k\pi}{n}\right)$,
$k\geqslant 3$ generate frieze patterns with negative entries.
\end{rem}

\section{General partitions. Frieze patterns}
\pn Now we can prove the theorem.

\begin{theor} A partition of an $m$-gon into polygonal parts by
non intersecting diagonals generates a frieze pattern of width
$m-3$. \end{theor}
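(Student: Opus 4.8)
The plan is to recast the diagonal criterion stated in Section~1 as a single matrix identity and then to induct on the number of parts of the partition, stripping off one ``ear'' at each step.

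\emph{Reformulation.} For $a\in\mathbb{R}$ put $M(a)=\bigl(\begin{smallmatrix}a&-1\\1&0\end{smallmatrix}\bigr)$, a matrix of determinant $1$. The recursion $v_0=1$, $v_1=a_1$, $v_{k+1}=a_{k+1}v_k-v_{k-1}$ is precisely $\bigl(\begin{smallmatrix}v_k\\v_{k-1}\end{smallmatrix}\bigr)=M(a_k)\cdots M(a_1)\bigl(\begin{smallmatrix}1\\0\end{smallmatrix}\bigr)$, so the second property recalled in Section~1 — that a periodic first row of period $N$ give a frieze of width $N-3$ — amounts to saying that for every cyclic starting point the product of the corresponding $N-1$ consecutive matrices carries $\bigl(\begin{smallmatrix}1\\0\end{smallmatrix}\bigr)$ to $\bigl(\begin{smallmatrix}0\\1\end{smallmatrix}\bigr)$. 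Comparing two consecutive starting points and using $\det M(a)=1$, I would show this is equivalent to the single identity $M(a_N)M(a_{N-1})\cdots M(a_1)=-I$ (which, by periodicity, holds for one cyclic ordering of the $a_i$ exactly when it holds for all). Hence, writing the vertices of the $m$-gon cyclically and $a_j=w(v_j)$, it suffices to prove $M(a_m)\cdots M(a_1)=-I$.

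\emph{Key lemma.} Next I would establish that $M(2\cos\theta)$ has eigenvalues $e^{\pm i\theta}$, which are distinct for $\theta=\pi/n$, so $M(2\cos(\pi/n))$ is diagonalisable and $M(2\cos(\pi/n))^{\,n}=-I$ (a matrix reformulation of Theorem~2.2); consequently $M(w_n)^{\,n-2}=-M(w_n)^{-2}$, and a short $2\times2$ computation then yields
\[ M(q)\,M(w_n)^{\,n-2}\,M(p)=M(q-w_n)\,M(p-w_n)\qquad\text{for all }p,q\in\mathbb{R}. \]
The special case $n=m$, namely $M(w_m)^m=-I$, already settles the trivial partition, which is the base of the induction. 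I expect the real obstacle to be precisely this setup — recognising that deleting an $n$-gon from a partition is governed by the above identity, which in turn collapses to $M(w_n)^n=-I$; granted the identity, the induction is routine.

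\emph{Inductive step.} Suppose $\sigma$ has at least two parts. Its dual graph (parts as vertices, shared diagonals as edges) is a tree and hence has a leaf: a part $P$, an $n$-gon with $3\leqslant n\leqslant m-1$, exactly one of whose sides is a diagonal of $\sigma$; then the vertices of $P$ are $n$ consecutive vertices $v_j,\dots,v_{j+n-1}$ of the $m$-gon and $v_jv_{j+n-1}$ is that diagonal. Removing $P$ produces a partition $\sigma'$ of an $(m-n+2)$-gon with one fewer part; in passing from $\sigma$ to $\sigma'$ the interior vertices $v_{j+1},\dots,v_{j+n-2}$ (each of weight $w_n$) disappear, the two weights $w(v_j)$ and $w(v_{j+n-1})$ each drop by $w_n$, and all remaining weights are unchanged. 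Applying the induction hypothesis to $\sigma'$, written as the cyclic matrix product based at the vertex $v_{j+n-1}$, and isolating the block $R=M(w(v_{j+m-1}))\cdots M(w(v_{j+n}))$ common to $\sigma$ and $\sigma'$, one reads off $R=-M(w(v_j)-w_n)^{-1}\,M(w(v_{j+n-1})-w_n)^{-1}$. Now form the cyclic matrix product for $\sigma$ based at $v_j$; the $n-2$ deleted vertices contribute the block $M(w_n)^{\,n-2}$, and the lemma rewrites $M(w(v_{j+n-1}))\,M(w_n)^{\,n-2}\,M(w(v_j))=M(w(v_{j+n-1})-w_n)\,M(w(v_j)-w_n)$. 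Substituting the expression for $R$, the two pairs of mutually inverse factors cancel and the product equals $-I$. By the reformulation, $\sigma$ generates a frieze pattern of width $m-3$, and the induction is complete.
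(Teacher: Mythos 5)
Your proposal is correct, and it reaches the theorem by a genuinely different route from the paper's. Both arguments are inductions that strip off an ``ear'' (a part all but one of whose sides lie on the boundary) and both ultimately rest on the same arithmetic fact about $w_n=2\cos(\pi/n)$ --- your $M(w_n)^n=-I$ is exactly the matrix form of the paper's $Q_{n-2}(w_n)=1$, $Q_{n-1}(w_n)=0$. The difference is in how the frieze condition is certified. The paper verifies the diagonal recursion directly in the enlarged pattern, which forces a case analysis according to where the diagonal starts relative to the inserted block of $t$'s (a ``first case'', a ``second case'', and several special cases dismissed as similar); you instead package the condition for \emph{all} diagonals into the single cyclically invariant identity $M(a_m)\cdots M(a_1)=-I$, so one computation --- the conjugation-type lemma $M(q)M(w_n)^{n-2}M(p)=M(q-w_n)M(p-w_n)$, which I checked and which is correct --- handles every diagonal at once, and the inductive step is a clean cancellation of two inverse factors. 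What your approach buys is the elimination of the case analysis and a transparent base case ($M(w_m)^m=-I$ for the trivial partition); what it costs is the importation of the $SL_2$ transfer-matrix formalism, which the paper deliberately avoids in favour of a purely ``elementary'' recursion-level argument (the matrix language does appear in the survey \cite{MG} the paper cites). The only points you leave as claims rather than computations --- the equivalence of the diagonal property at all starting points with the single identity $\prod M(a_i)=-I$, and the $2\times2$ verification of the key lemma --- are both short and check out, so I see no gap.
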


\begin{proof} Induction. Let the statement is true for $m<n$ and
let $P$ be a convex $n$-gon and $\sigma$ be its partition into
polygons $P_1,\ldots,P_k$ with number of vertices
$i_1,\ldots,i_k$, respectively.  As
$i_1-2+i_2-2+\ldots+i_k-2=n-2$, then $i_j-1$ edges of some $P_j$
are edges of $P$. Thus, we can cut $P_j$ from $P$ and obtain the
$(n-i_j+2)$-gon $V$ with partition $\tau$. Set $s:=n-i_j+2$. The
partition $\tau$ generates a frieze pattern $F$ of width $s-3$
with the first row $\ldots,a_1,a_2,\ldots,a_s,\ldots$. The
$(s-2)$-nd element of every diagonal in $F$ is $1$ and the next
element
--- $0$. Let $r=i_j$ and let $t$ be the weight of $P_j$, $t=2\cos\left(\frac \pi
r\right)$. Thus, $Q_{r-2}(t)=1$ and $Q_{r-1}(t)=0$. We consider
the pattern $F'$ with the first row
$$\ldots,a_{i-1},a_i+t,\underbrace{t,\ldots,t}_{r-2},a_{i+1}+t,a_{i+2},\ldots$$
and will define further rows, using the diagonal properties. We
must prove that the $(n-2)$-nd (i.e. $(r+s-4)$-th) element of
every diagonal in $F'$ is $1$ and the next element --- $0$. \pmn
In what follows we will write $q_m$ instead of $Q_m(t)$.\pmn
\underline{The first case.} Let us construct the diagonal $d'$ in
$F'$ with elements $u_1=a_1,u_2,\ldots$ and let us consider the
auxiliary  diagonal $d$ in $F$ with elements $v_1=a_1,v_2,\ldots$.
We have: $u_1=v_1,\ldots,u_{i-1}=v_{i-1}$. Then,
\begin{multline*} u_i=v_i+v_{i-1}t,\,\,
u_{i+1}=v_it+v_{i-1}(t^2-1)=v_iq_1+v_{i-1}q_2,\ldots,
u_{i+r-3}=\\=v_iq_{r-3}+v_{i-1}q_{r-2}=v_iq_{r-3}+v_{i-1},\,\,
u_{i+r-2}=v_iq_{r-2}+v_{i-1}q_{r-1}=v_i.\end{multline*} Then
$$u_{i+r-1}=a_{i+1}v_iq_{r-2}+v_iq_{r-2}t-v_iq_{r-3}-v_{i-1}q_{r-2}=v_{i+1},\,\,
u_{i+r}=v_{i+2},\ldots$$ Thus, $d'$ differs from $d$ by insertion
of $r-2$ elements: $u_i,u_{i+1},\ldots,u_{i+r-3}$, i.e. the
diagonal $d'$ has $s+r-5$ elements, as demanded. \pmn
\underline{The second case.} Now let the first row be
$\ldots,\underbrace{\underline{t},t\ldots,t}_k,a_1+t,a_2,\ldots,a_{s-1},a_s+t,t,\ldots$
and the first element $f_1$ of the diagonal $d'$ be the underlined
$t$. We will consider two diagonals in $F$ with the fist elements
$v_1=a_1$ and $u_1=a_2$, respectively. We have:
$f_1=t,\,\,f_2=q_2,\ldots,f_k=q_k$. Then
\begin{multline*} f_{k+1}=a_1q_k+q_kt-q_{k-1}=v_1q_k+q_{k+1},\,\,f_{k+2}=v_2q_k+u_1q_{k+1},
\ldots,\\f_{k+s-2}=a_{s-2}v_{s-3}q_k+a_{s-2}u_{s-4}q_{k+1}-v_{s-4}q_k-u_{s-5}q_{k+1}
=v_{s-2}q_k+u_{s-3}q_{k+1}=q_k+u_{s-3}q_{k+1}.\end{multline*} Then
$$
f_{k+s-1}=v_{s-1}q_k+u_{s-2}q_{k+1}=u_{s-2}q_{k+1}=q_{k+1},\,\,
f_{k+s}=u_{s-1}q_{k+1}+tq_{k+1}-q_k=q_{k+2},\ldots$$ Thus, the
diagonal $d'$ is the result of the insertion of $s-2$ elements
$f_{k+1},\ldots,f_{k+s-2}$ into the sequence $q_1,\ldots,q_{r-3}$,
i.e. $d'$ has $s+r-5$ elements, as demanded. \pmn Special cases
$k=0, k=r-2, f_1=a_1+t$, can be considered in the same way.
\end{proof}

\begin{ex} Let us consider the partition of an octagon into
triangle, quadrangle and pentagon:
\begin{center}{\parbox{20mm}{\begin{picture}(40,40)
\multiput(10,0)(0,40){2}{\line(1,0){20}}
\multiput(0,10)(40,0){2}{\line(0,1){20}}
\multiput(0,10)(30,30){2}{\line(1,-1){10}}
\multiput(0,30)(30,-30){2}{\line(1,1){10}}
\put(0,10){\line(2,1){40}}
\put(10,0){\line(1,1){30}}\end{picture}}
\parbox{8cm}{The weight of triangle is $1$, of quadrangle --- $\sqrt 2$, of
pentagon --- $t$, where $t^2=t+1$.}}\end{center} This partition
generates the frieze pattern of width 5 with 8-periodic rows (here
we write "s" instead of $\sqrt 2$):
\[{\scriptsize \begin{tabular}{cccccccccccccccc}
 0&&0&&0&&0&&0&&0&&0&&0&\\
&1&&1&&1&&1&&1&&1&&1&&1\\
$1+t$&&$1+s$&&$s$&&$s$&&$1+s+t$&&$t$ &&$t$&&$t$&\\
&$s+t+st$&&$1+s$&&$1$&&$1+s+st$
&&$2t+st$&&$t$&&$t$&&$2t$\\
$t+2st$&&$1+t+st$&&$1$&&$1+t$&&$t+2st$&&$1+t+st$&&$1$&&$1+t$&\\
&$2t+st$&&$t$&&$t$&& $2t$&&$s+t+st$&&$1+s$&&$1$&&$1+s+st$\\
$1+s+t$&&$t$&&$t$&&$t$&&$1+t$&&$1+s$&&$s$&&$s$&\\
&1&&1&&1&&1&&1&&1&&1&&1\\ 0&&0&&0&&0&&0&&0&&0&&0&\\
\end{tabular}}\]
\end{ex}

\vspace{5mm}
\end{document}